\let\oldmarginpar\marginpar
\renewcommand\marginpar[1]{\-\oldmarginpar[\raggedleft\footnotesize #1]%
{\raggedright\footnotesize #1}}
\begin{document}

\newtheorem{theorem}{Theorem}[section]
\newtheorem{corollary}[theorem]{Corollary}
\newtheorem{lemma}[theorem]{Lemma}
\newtheorem{proposition}[theorem]{Proposition}
\theoremstyle{definition}
\newtheorem{definition}[theorem]{Definition}
\theoremstyle{remark}
\newtheorem{remark}[theorem]{Remark}
\theoremstyle{definition}
\newtheorem{example}[theorem]{Example}

\def\R{{\mathbb R}}
\def\H{{\mathbb H}}
\def\rank{{\text{rank}\,}}
\def\bd{{\partial}}

\title{V-Semi-slant submersions}

\author{Kwang-Soon Park}
\address{Department of Mathematical Sciences, Seoul National University, Seoul 151-747, Republic of Korea}
\email{parkksn@gmail.com}




\keywords{Riemannian submersion \and slant angle \and totally geodesic}

\subjclass[2000]{53C15; 53C43.}   

\begin{abstract}
Let $F$ be a Riemannian submersion from an almost Hermitian
manifold $(M,g_M,J)$ onto a Riemannian manifold $(N,g_N)$. We
introduce the notion of the v-semi-slant submersion. And then we
obtain some properties on it. In particular, we give some examples
for it.
\end{abstract}

\maketitle
\section{Introduction}\label{intro}
\addcontentsline{toc}{section}{Introduction}

Let $F$ be a $C^{\infty}$-submersion from a semi-Riemannian
manifold $(M,g_M)$ onto a semi-Riemannian manifold $(N,g_N)$. Then
according to the conditions on the map $F : (M,g_M) \mapsto
(N,g_N)$, we have the following submersions:

semi-Riemannian submersion and Lorentzian submersion \cite{FIP},
Riemannian submersion (\cite{G}, \cite{O}), slant submersion
(\cite{C}, \cite{S}), almost Hermitian submersion \cite{W},
contact-complex submersion \cite{IIMV}, quaternionic submersion
\cite{IMV}, almost h-slant submersion and h-slant submersion
\cite{P}, semi-invariant submersion \cite{S2}, almost h-semi-invariant
submersion and h-semi-invariant
submersion \cite{P2}, semi-slant submersions \cite{PP}, almost h-semi-slant submersions and h-semi-slant submersions \cite{P3}, etc.
As we know, Riemannian submersions are
related with physics and have their applications in the Yang-Mills
theory (\cite{BL2}, \cite{W2}), Kaluza-Klein theory (\cite{BL},
\cite{IV}), Supergravity and superstring theories (\cite{IV2},
\cite{M}), etc.

The paper is organized as follows. In section 2 we remind some notions which is neeed for this paper.
 In section 3 we give the definition of the v-semi-slant submersion and obtain
some interesting properties on it. In section 4 we construct
some examples of the v-semi-slant submersion.

\section{Preliminaries}\label{prelim}

Let $(M, g_M)$ and $(N, g_N)$ be Riemannian
manifolds, where $M$, $N$ are $C^{\infty}-$manifolds and $g_M$, $g_N$ are Riemannian metrics,  
and $F : M \mapsto N$ a $C^{\infty}-$submersion. The map
$F$ is said to be {\em Riemannian submersion} if the differential
$F_*$ preserves the lengths of horizontal vectors \cite{IMV}. 

Let $(M, g_M, J)$ be an almost Hermitian manifold, where $J$ is an almost complex structure. A Riemannian
submersion $F : (M, g_M, J)\mapsto (N, g_N)$ is called a {\em
slant submersion} if the angle $\theta=\theta(X)$ between $JX$ and the
space $\ker (F_*)_p$ is constant for any nonzero $X\in T_p M$ and
$p\in M$ \cite{S}. 

We call $\theta$ a {\em slant angle}. 

A Riemannian submersion $F : (M, g_M, J)\mapsto (N, g_N)$ is called
a {\em semi-invariant submersion} if there is a distribution
$\mathcal{D}_1 \subset \ker F_*$ such that
$$
\ker F_*=\mathcal{D}_1\oplus \mathcal{D}_2, \
 J(\mathcal{D}_1)=\mathcal{D}_1, \ J(\mathcal{D}_2)\subset (\ker
F_*)^{\perp},
$$
where $\mathcal{D}_2$ is the orthogonal complement of
$\mathcal{D}_1$ in $\ker F_*$ \cite{S}.  

A Riemannian submersion $F : (M,g_M,J)\mapsto
(N,g_N)$ is called a {\em semi-slant submersion} if there is a
distribution $\mathcal{D}_1\subset \ker F_*$ such that
$$
\ker F_* =\mathcal{D}_1\oplus \mathcal{D}_2, \
J(\mathcal{D}_1)=\mathcal{D}_1,
$$
and the angle $\theta=\theta(X)$ between $JX$ and the space
$(\mathcal{D}_2)_p$ is constant for nonzero $X\in
(\mathcal{D}_2)_p$ and $p\in M$, where $\mathcal{D}_2$ is the
orthogonal complement of $\mathcal{D}_1$ in $\ker F_*$. 

We call the angle $\theta$ a {\em semi-slant angle}.

Let $(M, g_M)$ and $(N,
g_N)$ be Riemannian manifolds and $F : (M, g_M) \mapsto (N, g_N)$
a smooth map. The second fundamental form of $F$ is given by
$$
(\nabla F_*)(X,Y) := \nabla^F _X F_* Y-F_* (\nabla _XY) \quad
\text{for} \ X,Y\in \Gamma(TM),
$$
where $\nabla^F$ is the pullback connection and we denote
conveniently by $\nabla$ the Levi-Civita connections of the
metrics $g_M$ and $g_N$ \cite{BW}. Recall that $F$ is said to be {\em
harmonic} if $trace (\nabla F_*)=0$ and $F$ is called a {\em
totally geodesic} map if $(\nabla F_*)(X,Y)=0$ for $X,Y\in \Gamma
(TM)$ \cite{BW}.

\section{V-Semi-slant submersions}\label{semi}

\begin{definition}
Let $(M,g_M,J)$ be an almost Hermitian manifold and $(N,g_N)$ a
Riemannian manifold. A Riemannian submersion $F : (M,g_M,J)\mapsto
(N,g_N)$ is called a {\em v-semi-slant submersion} if there is a
distribution $\mathcal{D}_1\subset (\ker F_*)^{\perp}$ such that
$$
(\ker F_*)^{\perp} =\mathcal{D}_1\oplus \mathcal{D}_2, \
J(\mathcal{D}_1)=\mathcal{D}_1,
$$
and the angle $\theta=\theta(X)$ between $JX$ and the space
$(\mathcal{D}_2)_p$ is constant for nonzero $X\in
(\mathcal{D}_2)_p$ and $p\in M$, where $\mathcal{D}_2$ is the
orthogonal complement of $\mathcal{D}_1$ in $(\ker F_*)^{\perp}$.
\end{definition}

We call the angle $\theta$ a {\em v-semi-slant angle}.

\begin{remark}
Let $F$ be a v-semi-slant submersion from an almost Hermitian manifold $(M,g_M,J)$ onto a Riemannian manifold $(N,g_N)$. Then there is a
distribution $\mathcal{D}_1\subset (\ker F_*)^{\perp}$ such that
$$
(\ker F_*)^{\perp} =\mathcal{D}_1\oplus \mathcal{D}_2, \
J(\mathcal{D}_1)=\mathcal{D}_1,
$$
and the angle $\theta=\theta(X)$ between $JX$ and the space
$(\mathcal{D}_2)_p$ is constant for nonzero $X\in
(\mathcal{D}_2)_p$ and $p\in M$, where $\mathcal{D}_2$ is the
orthogonal complement of $\mathcal{D}_1$ in $(\ker F_*)^{\perp}$.

If $\mathcal{D}_2=(\ker F_*)^{\perp}$, then we call the map $F$ a {\em v-slant submersion} and the angle $\theta$ {\em v-slant angle} \cite{S}.
Otherwise, if $\theta=\frac{\pi}{2}$, then we call the map $F$ a {\em v-semi-invariant submersion} \cite{S2}.
\end{remark}

Let $F : (M,g_M,J)\mapsto (N,g_N)$ be a v-semi-slant
submersion. Then there is a distribution $\mathcal{D}_1\subset
(\ker F_*)^{\perp}$ such that
$$
(\ker F_*)^{\perp} =\mathcal{D}_1\oplus \mathcal{D}_2, \
J(\mathcal{D}_1)=\mathcal{D}_1,
$$
and the angle $\theta=\theta(X)$ between $JX$ and the space
$(\mathcal{D}_2)_p$ is constant for nonzero $X\in
(\mathcal{D}_2)_p$ and $p\in M$, where $\mathcal{D}_2$ is the
orthogonal complement of $\mathcal{D}_1$ in $(\ker F_*)^{\perp}$.

Then for $X\in \Gamma((\ker F_*)^{\perp})$, we have
$$
X = PX+QX,
$$
where $PX\in \Gamma(\mathcal{D}_1)$ and $QX\in
\Gamma(\mathcal{D}_2)$.

For $X\in \Gamma(\ker F_*)$, we get
$$
JX = \phi X+\omega X,
$$
where $\phi X\in \Gamma(\ker F_*)$ and $\omega X\in \Gamma((\ker
F_*)^{\perp})$.

For $Z\in \Gamma((\ker F_*)^{\perp})$, we obtain
$$
JZ = BZ+CZ,
$$
where $BZ\in \Gamma(\ker F_*)$ and $CZ\in \Gamma((\ker
F_*)^{\perp})$.

For $U\in \Gamma(TM)$, we have
$$
U = \mathcal{V}U+\mathcal{H}U,
$$
where $\mathcal{V}U\in \Gamma(\ker F_*)$ and $\mathcal{H}U\in
\Gamma((\ker F_*)^{\perp})$.

Then
$$
\ker F_* = B \mathcal{D}_2 \oplus \mu,
$$
where $\mu$ is the orthogonal complement of $B \mathcal{D}_2$
in $\ker F_*$ and is invariant  under $J$. Furthermore,

{\setlength\arraycolsep{2pt}
\begin{eqnarray*}
& & C \mathcal{D}_1 = \mathcal{D}_1, B \mathcal{D}_1 = 0, C \mathcal{D}_2 \subset \mathcal{D}_2,
\omega(\ker F_*) = \mathcal{D}_2        \\
& & \phi^2+B\omega = -id, C^2+\omega B = -id, \omega \phi +C\omega
= 0, BC+\phi B = 0.
\end{eqnarray*}}

Define the tensors $\mathcal{T}$ and $\mathcal{A}$ by
\begin{align*}
  &\mathcal{A}_E F= \mathcal{H}\nabla_{\mathcal{H}E} \mathcal{V}F+\mathcal{V}\nabla_{\mathcal{H}E} \mathcal{H}F \\
   &\mathcal{T}_E F= \mathcal{H}\nabla_{\mathcal{V}E} \mathcal{V}F+\mathcal{V}\nabla_{\mathcal{V}E}
   \mathcal{H}F
\end{align*}
for vector fields $E, F$ on $M$, where $\nabla$ is the Levi-Civita
connection of $g_M$. Define
$$
\widehat{\nabla}_X Y := \mathcal{V}\nabla_X Y \quad \text{for} \ X,Y\in \Gamma(\ker F_*).
$$
Then we easily have

\begin{lemma}
Let $(M,g_M,J)$ be a K\"{a}hler manifold and $(N,g_N)$ a
Riemannian manifold. Let $F : (M,g_M,J) \mapsto (N,g_N)$ be a
v-semi-slant submersion. Then we get
\begin{enumerate}
\item
\begin{align*}
  &\widehat{\nabla}_X \phi Y+\mathcal{T}_X \omega Y = \phi\widehat{\nabla}_X Y+B\mathcal{T}_X Y    \\
  &\mathcal{T}_X \phi Y+\mathcal{H}\nabla_X \omega Y =
  \omega\widehat{\nabla}_X Y+C\mathcal{T}_X Y
\end{align*}
for $X,Y\in \Gamma(\ker F_*)$.
\item
\begin{align*}
  &\mathcal{V}\nabla_Z BW+\mathcal{A}_Z CW = \phi\mathcal{A}_Z W+B\mathcal{H}\nabla_Z W    \\
  &\mathcal{A}_Z BW+\mathcal{H}\nabla_Z CW = \omega\mathcal{A}_Z
  W+C\mathcal{H}\nabla_Z W
\end{align*}
for $Z,W\in \Gamma((\ker F_*)^{\perp})$.
\item
\begin{align*}
  &\widehat{\nabla}_X BZ+\mathcal{T}_X CZ = \phi\mathcal{T}_X Z+B\mathcal{H}\nabla_X Z    \\
  &\mathcal{T}_X BZ+\mathcal{H}\nabla_X CZ =
  \omega\mathcal{T}_X Z+C\mathcal{H}\nabla_X Z
\end{align*}
for $X\in \Gamma(\ker F_*)$ and $Z\in \Gamma((\ker F_*)^{\perp})$.
\end{enumerate}
\end{lemma}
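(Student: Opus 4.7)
The plan is to use the K\"ahler condition $\nabla J=0$ as the single source of identities, and to read off each of the three pairs of equations by separating the vertical and horizontal components. For part (1) I take $X,Y\in \Gamma(\ker F_*)$ and write $\nabla_X(JY)=J\nabla_X Y$. On the left, I expand $JY=\phi Y+\omega Y$ and decompose $\nabla_X\phi Y$ and $\nabla_X\omega Y$ using the definitions of $\widehat{\nabla}$ and $\mathcal{T}$: since both $X$ and $\phi Y$ are vertical, $\nabla_X\phi Y=\widehat{\nabla}_X\phi Y+\mathcal{T}_X\phi Y$, while $\omega Y$ is horizontal so $\nabla_X\omega Y=\mathcal{T}_X\omega Y+\mathcal{H}\nabla_X\omega Y$. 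On the right, I split $\nabla_X Y=\widehat{\nabla}_X Y+\mathcal{T}_X Y$ and apply $J$, writing $J\widehat{\nabla}_X Y=\phi\widehat{\nabla}_X Y+\omega\widehat{\nabla}_X Y$ and $J\mathcal{T}_X Y=B\mathcal{T}_X Y+C\mathcal{T}_X Y$. Equating vertical and horizontal parts yields the two stated identities.

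For part (2), I take $Z,W\in \Gamma((\ker F_*)^\perp)$ and again apply $\nabla_Z(JW)=J\nabla_Z W$. The decompositions are now driven by $\mathcal{A}$: since $Z$ is horizontal, $\nabla_Z W=\mathcal{H}\nabla_Z W+\mathcal{A}_Z W$, $\nabla_Z BW=\mathcal{A}_Z BW+\mathcal{V}\nabla_Z BW$, and $\nabla_Z CW=\mathcal{H}\nabla_Z CW+\mathcal{A}_Z CW$. Writing $JW=BW+CW$ on the left and $J(\mathcal{A}_Z W+\mathcal{H}\nabla_Z W)=\phi\mathcal{A}_Z W+\omega\mathcal{A}_Z W+B\mathcal{H}\nabla_Z W+C\mathcal{H}\nabla_Z W$ on the right, the vertical/horizontal separation produces the claimed pair. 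For part (3), I take $X\in \Gamma(\ker F_*)$ and $Z\in \Gamma((\ker F_*)^\perp)$ in $\nabla_X(JZ)=J\nabla_X Z$; here $\nabla_X Z=\mathcal{T}_X Z+\mathcal{H}\nabla_X Z$ with $\mathcal{T}_X Z$ vertical, $\nabla_X BZ=\widehat{\nabla}_X BZ+\mathcal{T}_X BZ$, and $\nabla_X CZ=\mathcal{T}_X CZ+\mathcal{H}\nabla_X CZ$, and the same bookkeeping closes the argument.

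There is no real obstacle beyond careful accounting of which summand is vertical and which is horizontal; the relations $BC+\phi B=0$, $\omega\phi+C\omega=0$, etc., listed before the lemma are not needed here, since each identity is obtained by a single projection after one application of the parallelism of $J$. The only point requiring attention is the two different conventions for $\mathcal{T}$ and $\mathcal{A}$: for $\mathcal{T}$ the argument must be vertical, and $\mathcal{T}_X(\cdot)$ swaps the vertical/horizontal character of its second argument, while $\mathcal{A}$ plays the analogous role with horizontal first argument. Keeping track of these, the three pairs of equations come out simultaneously, each as the vertical and horizontal projections of one K\"ahler identity.
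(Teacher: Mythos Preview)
Your proposal is correct and is precisely the standard argument the paper has in mind: the paper states the lemma with only the phrase ``Then we easily have'' and gives no explicit proof, so your derivation---applying the K\"ahler identity $\nabla_X(JY)=J\nabla_X Y$ in each of the three cases and separating vertical and horizontal components via the definitions of $\mathcal{T}$, $\mathcal{A}$, $\widehat{\nabla}$, $\phi$, $\omega$, $B$, $C$---is exactly the intended (and essentially the only) route.
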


\begin{theorem}
Let $F$ be a v-semi-slant submersion from an almost Hermitian
manifold $(M,g_M,J)$ onto a Riemannian manifold $(N,g_N)$. Then
the slant distribution $\mathcal{D}_2$ is integrable if and only
if we obtain
$$
\mathcal{A}_X Y = 0 \quad \text{and} \quad PC(\nabla_X Y-\nabla_Y X)=0
$$
for $X,Y\in \Gamma(\mathcal{D}_2)$
\end{theorem}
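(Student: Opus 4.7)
The plan is to decompose the integrability condition $[X,Y]\in\Gamma(\mathcal{D}_2)$ for $X,Y\in\Gamma(\mathcal{D}_2)$ into two independent pieces — the vertical component and the $\mathcal{D}_1$-component of the horizontal part — and to match each with one of the two equations in the statement. Since $\mathcal{D}_2\subset(\ker F_*)^{\perp}$, integrability amounts to the conjunction of $\mathcal{V}[X,Y]=0$ and $P\mathcal{H}[X,Y]=0$, so I would establish these two equivalences separately.

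First I would handle the vertical condition directly from the definition of the O'Neill tensor $\mathcal{A}$. Since $X$ and $Y$ are horizontal, $\mathcal{V}\nabla_X Y=\mathcal{A}_X Y$ and $\mathcal{V}\nabla_Y X=\mathcal{A}_Y X$. The standard antisymmetry $\mathcal{A}_X Y=-\mathcal{A}_Y X$ on horizontal pairs then gives $\mathcal{V}[X,Y]=2\mathcal{A}_X Y$, so the vertical part of the bracket vanishes iff $\mathcal{A}_X Y=0$. Note that this step only uses the Riemannian submersion structure, not any compatibility of $\nabla$ with $J$, which is why the theorem does not require the Kähler hypothesis appearing in the previous lemma.

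Next I would treat the $\mathcal{D}_1$-component by comparing $P\mathcal{H}[X,Y]$ with $PC\mathcal{H}[X,Y]$ using the algebraic decomposition recorded above the statement. Writing $\mathcal{H}[X,Y]=U_1+U_2$ with $U_i\in\Gamma(\mathcal{D}_i)$, applying $J$ and splitting via $J\mathcal{H}[X,Y]=B\mathcal{H}[X,Y]+C\mathcal{H}[X,Y]$, the identities $B\mathcal{D}_1=0$, $C\mathcal{D}_1=\mathcal{D}_1$, and $C\mathcal{D}_2\subset\mathcal{D}_2$ collapse $PC\mathcal{H}[X,Y]$ down to $JU_1$. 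Invertibility of $J$ then gives $PC\mathcal{H}[X,Y]=0$ iff $U_1=P\mathcal{H}[X,Y]=0$, which is exactly the required equivalence for the horizontal piece.

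The main subtlety I anticipate is the interpretation of $PC(\nabla_X Y-\nabla_Y X)$ in the statement: since $C$ is defined only on the horizontal bundle, this must be read as $PC\mathcal{H}(\nabla_X Y-\nabla_Y X)=PC\mathcal{H}[X,Y]$, and one must verify that the $\mathcal{D}_2$-contribution $CU_2$ is killed by $P$ precisely because $C\mathcal{D}_2\subset\mathcal{D}_2$. Beyond unpacking these conventions and checking the antisymmetry of $\mathcal{A}$ to ensure both directions of the equivalence hold, the argument is an algebraic bookkeeping exercise and I do not foresee any deeper obstruction.
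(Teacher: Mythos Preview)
Your proposal is correct and follows essentially the same route as the paper: identify $\mathcal{V}[X,Y]=2\mathcal{A}_X Y$ for the vertical piece, and then use the algebraic relations $C\mathcal{D}_1=\mathcal{D}_1$, $C\mathcal{D}_2\subset\mathcal{D}_2$ to show that the $\mathcal{D}_1$-component of $[X,Y]$ vanishes iff $PC[X,Y]=0$. The only cosmetic difference is that the paper tests the latter against $JZ$ for $Z\in\Gamma(\mathcal{D}_1)$ via the Hermitian identity $g_M([X,Y],JZ)=-g_M(J[X,Y],Z)$ (and, having assumed $\mathcal{A}_X Y=0$ first, applies $C$ directly to the horizontal bracket), whereas you argue by direct decomposition; the content is the same.
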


\begin{proof}
For $X,Y\in \Gamma(\mathcal{D}_2)$ and $Z\in \Gamma(\mathcal{D}_1)$, assume that $\mathcal{A}_X Y=\frac{1}{2}\mathcal{V}[X,Y] = 0$, we obtain
\begin{align*}
g_M ([X,Y], JZ)&= -g_M (J(\nabla_X Y-\nabla_Y X), Z)    \\
      &= -g_M (B\nabla_X Y+C\nabla_X Y-B\nabla_Y X-C\nabla_Y X, Z)    \\
      &= -g_M (C(\nabla_X Y-\nabla_Y X), Z).
\end{align*}
Therefore, we have the result.
\end{proof}

Similarly, we get

\begin{theorem}
Let $F$ be a v-semi-slant submersion from an almost Hermitian
manifold $(M,g_M,J)$ onto a Riemannian manifold $(N,g_N)$. Then
the complex distribution $\mathcal{D}_1$ is integrable if and only
if we have
$$
\mathcal{A}_X Y = 0 \quad \text{and} \quad B(\nabla_X Y-\nabla_Y X)=0
$$
for $X,Y\in \Gamma(\mathcal{D}_1)$.
\end{theorem}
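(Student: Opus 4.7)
My plan is to imitate the preceding theorem's argument but swap the roles of $\mathcal{D}_1$ and $\mathcal{D}_2$. Integrability of $\mathcal{D}_1$ is equivalent to $[X,Y]\in\Gamma(\mathcal{D}_1)$ for every $X,Y\in\Gamma(\mathcal{D}_1)$, which in turn splits into two independent conditions on the commutator: (i) $[X,Y]$ is horizontal, i.e.\ $\mathcal{V}[X,Y]=0$; and (ii) $[X,Y]$ is orthogonal to $\mathcal{D}_2$. For (i) I will invoke the identity $\mathcal{A}_X Y=\tfrac{1}{2}\mathcal{V}[X,Y]$ valid on horizontal arguments (using antisymmetry of $\mathcal{A}$ there), which makes (i) literally the condition $\mathcal{A}_X Y=0$.

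With (i) assumed, so that $[X,Y]$ is horizontal, (ii) must be translated into an identity involving $B$. The delicate point is that the convenient test vectors used in the preceding theorem, namely $JZ$ with $Z\in\Gamma(\mathcal{D}_1)$, sweep out $\mathcal{D}_1$ precisely because $J\mathcal{D}_1=\mathcal{D}_1$; but $\mathcal{D}_2$ is not $J$-invariant, so an analogous choice is unavailable. Instead I will use the identity $\omega(\ker F_*)=\mathcal{D}_2$ recorded earlier: any $V\in\Gamma(\mathcal{D}_2)$ equals $\omega U$ for some $U\in\Gamma(\ker F_*)$, so it suffices to test $[X,Y]$ against such $V$.

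The core computation will then be
\begin{align*}
g_M([X,Y],V)&=g_M([X,Y],\omega U)=g_M([X,Y],JU-\phi U)=g_M([X,Y],JU)\\
&=-g_M(J[X,Y],U)=-g_M(B[X,Y]+C[X,Y],U)=-g_M(B[X,Y],U),
\end{align*}
where $\phi U\in\Gamma(\ker F_*)$ is killed against the horizontal $[X,Y]$, the almost Hermitian relation $g_M(JA,B)=-g_M(A,JB)$ is applied in the middle, and $C[X,Y]\in\Gamma((\ker F_*)^{\perp})$ is killed against the vertical $U$. Since $B[X,Y]\in\Gamma(\ker F_*)$ and $U$ ranges over all of $\Gamma(\ker F_*)$, the pairing vanishes for every $V\in\Gamma(\mathcal{D}_2)$ if and only if $B[X,Y]=0$, that is, $B(\nabla_X Y-\nabla_Y X)=0$. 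The forward direction falls out immediately: if $[X,Y]\in\Gamma(\mathcal{D}_1)$ then $\mathcal{V}[X,Y]=0$ and $B[X,Y]=0$ via $B\mathcal{D}_1=0$. I do not expect any real obstacle; the only new insight beyond the previous theorem is the substitution $V=\omega U$, which compensates for the missing $J$-invariance of $\mathcal{D}_2$ by the surjectivity of $\omega$ onto it.
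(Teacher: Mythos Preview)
Your proof is correct and matches the paper's intended approach: the paper gives no proof here beyond ``Similarly, we get,'' and your use of test vectors $V=\omega U$ (exploiting $\omega(\ker F_*)=\mathcal{D}_2$) to compensate for the lack of $J$-invariance of $\mathcal{D}_2$ is exactly the device the paper itself employs in the proof of the subsequent K\"ahler lemma. The computation and both directions of the equivalence are handled correctly.
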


\begin{lemma}
Let $(M,g_M,J)$ be a K\"{a}hler manifold and $(N,g_N)$ a Riemannian
manifold. Let $F : (M,g_M,J) \mapsto (N,g_N)$ be a v-semi-slant
submersion. Then the complex distribution $\mathcal{D}_1$ is
integrable if and only if we get
$$
\mathcal{A}_X Y = 0 \quad \text{for} \ X,Y\in \Gamma(\mathcal{D}_1). 
$$
\end{lemma}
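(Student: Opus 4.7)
The plan is to invoke the previous Theorem, which characterizes integrability of $\mathcal{D}_1$ by the two conditions $\mathcal{A}_X Y = 0$ and $B(\nabla_X Y-\nabla_Y X)=0$ for $X,Y\in\Gamma(\mathcal{D}_1)$. Under the K\"ahler hypothesis the second condition should become redundant, reducing the criterion to a single vanishing.

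First I would observe that one implication is immediate: if $\mathcal{D}_1$ is integrable, then the previous Theorem yields $\mathcal{A}_X Y=0$ in particular. For the converse, assume $\mathcal{A}_X Y=0$ for all $X,Y\in\Gamma(\mathcal{D}_1)$; by O'Neill's antisymmetry $\mathcal{A}_X Y=\tfrac12\mathcal{V}[X,Y]$ on horizontal fields, this already gives $\mathcal{V}[X,Y]=0$, so $[X,Y]$ is horizontal and $B[X,Y]=\mathcal{V}(J[X,Y])$ is well-defined.

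Next I would use K\"ahlerness ($\nabla J=0$) to rewrite
\[
J[X,Y] \;=\; J\nabla_X Y-J\nabla_Y X \;=\; \nabla_X JY-\nabla_Y JX.
\]
Since $J(\mathcal{D}_1)=\mathcal{D}_1$, the vector fields $JX,JY$ again lie in $\Gamma(\mathcal{D}_1)$, and both $\nabla_X JY$ and $\nabla_Y JX$ have vertical components equal to $\mathcal{A}_X JY$ and $\mathcal{A}_Y JX$ respectively. Hence
\[
B(\nabla_X Y-\nabla_Y X) \;=\; \mathcal{V}(J[X,Y]) \;=\; \mathcal{A}_X JY-\mathcal{A}_Y JX \;=\; 0
\]
by the hypothesis applied to the pairs $(X,JY)$ and $(Y,JX)$ in $\Gamma(\mathcal{D}_1)$. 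Combined with $\mathcal{A}_X Y=0$, the previous Theorem then yields integrability.

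The main point (and the only mildly non-routine step) is recognizing that the $J$-invariance of $\mathcal{D}_1$ together with $\nabla J=0$ lets us reduce the auxiliary condition $B[X,Y]=0$ to a further instance of the vanishing of $\mathcal{A}$ on $\mathcal{D}_1$. No new tensor identities are needed beyond this and the standard antisymmetry of $\mathcal{A}$ on horizontal vectors.
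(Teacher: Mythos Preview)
Your argument is correct and is essentially the same as the paper's. Both reduce the question to the identity $\mathcal{V}\bigl(J[X,Y]\bigr)=\mathcal{A}_X JY-\mathcal{A}_Y JX$, obtained from $\nabla J=0$ together with $J(\mathcal{D}_1)=\mathcal{D}_1$. You package this as ``$B[X,Y]=0$, hence the second hypothesis of the previous Theorem holds'', while the paper writes it dually as $g_M([X,Y],\omega Z)=-g_M(\mathcal{A}_X JY-\mathcal{A}_Y JX,Z)$ for vertical $Z$; since $\omega(\ker F_*)=\mathcal{D}_2$, these are the same statement.
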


\begin{proof}
For $X,Y\in \Gamma(\mathcal{D}_1)$ and $Z\in \Gamma(\ker F_*)$, assume that $\mathcal{A}_X Y = 0$, we have
\begin{align*}
g_M ([X,Y], \omega Z)&=g_M ([X,Y], JZ)= -g_M (J(\nabla_X Y-\nabla_Y X), Z)    \\
      &= -g_M (\mathcal{A}_X JY+\mathcal{H}\nabla_X JY-\mathcal{A}_Y JX -\mathcal{H}\nabla_Y JX, Z)        \\
      &= -g_M (\mathcal{A}_X JY-\mathcal{A}_Y JX, Z).
\end{align*}
Therefore, the result follows.
\end{proof}

In a similar way, we have

\begin{lemma}
Let $(M,g_M,J)$ be a K\"{a}hler manifold and $(N,g_N)$ a
Riemannian manifold. Let $F : (M,g_M,J) \mapsto (N,g_N)$ be a
v-semi-slant submersion. Then the slant distribution $\mathcal{D}_2$
is integrable if and only if we obtain
$$
\mathcal{A}_X Y = 0 \quad \text{and} \quad P((\mathcal{A}_X BY-\mathcal{A}_Y BX)+ \mathcal{H}(\nabla_X CY-\nabla_Y CX))=0
$$
for $X,Y\in \Gamma(\mathcal{D}_2)$.
\end{lemma}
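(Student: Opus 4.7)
The plan is to follow the same pattern as Theorem 3.3, but exploit the Kähler condition $\nabla J = 0$ to rewrite the horizontal bracket in terms of O'Neill's $\mathcal{A}$-tensor acting on the components of $JX, JY$. Since $X, Y$ are horizontal, the vertical part of $[X,Y]$ is controlled by $\mathcal{A}_X Y = \tfrac12\mathcal{V}[X,Y]$, so $\mathcal{V}[X,Y]=0$ is equivalent to $\mathcal{A}_X Y = 0$. The only remaining part of integrability is to ensure $\mathcal{H}[X,Y]\in \mathcal{D}_2$, that is, $g_M([X,Y],Z)=0$ for all $Z\in \Gamma(\mathcal{D}_1)$; this is where the second condition will enter.

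I would first note that for $Z\in \Gamma(\mathcal{D}_1)$, $JZ\in \Gamma(\mathcal{D}_1)$ by the invariance $J\mathcal{D}_1=\mathcal{D}_1$, and that $J$ is an isometry, so
\[
g_M([X,Y],Z) = g_M(J[X,Y], JZ) = g_M(J\nabla_X Y - J\nabla_Y X,\ JZ).
\]
Using $\nabla J = 0$, this becomes $g_M(\nabla_X JY - \nabla_Y JX,\ JZ)$. Now decompose $JY = BY + CY$ with $BY\in \Gamma(\ker F_*)$ and $CY\in \Gamma((\ker F_*)^{\perp})$, and similarly for $JX$.

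The next step is to extract horizontal parts. For $X$ horizontal and $BY$ vertical, $\mathcal{H}\nabla_X BY = \mathcal{A}_X BY$; for $X, CY$ both horizontal, $\mathcal{H}\nabla_X CY = \mathcal{H}\nabla_X CY$ while the vertical part $\mathcal{A}_X CY$ is orthogonal to $JZ$. So, pairing against the horizontal vector $JZ$,
\[
g_M([X,Y],Z) = g_M\bigl((\mathcal{A}_X BY - \mathcal{A}_Y BX) + \mathcal{H}(\nabla_X CY - \nabla_Y CX),\ JZ\bigr).
\]
Since $JZ\in \Gamma(\mathcal{D}_1)$, I may replace the horizontal vector on the left by its $P$-projection without changing the pairing.

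Finally, I would conclude by observing that $JZ$ ranges over all of $\mathcal{D}_1$ as $Z$ does (again by $J\mathcal{D}_1 = \mathcal{D}_1$), so vanishing of $g_M([X,Y],Z)$ for every $Z\in \Gamma(\mathcal{D}_1)$ is equivalent to
\[
P\bigl((\mathcal{A}_X BY - \mathcal{A}_Y BX) + \mathcal{H}(\nabla_X CY - \nabla_Y CX)\bigr) = 0.
\]
Combined with $\mathcal{A}_X Y = 0$ handling the vertical part of $[X,Y]$, this gives the stated equivalence. The main technical obstacle is just bookkeeping: keeping the decomposition $\ker F_* \oplus \mathcal{D}_1 \oplus \mathcal{D}_2$ straight so that only the $\mathcal{D}_1$-relevant components survive and so that no nonhorizontal pieces are mistakenly paired against $JZ$.
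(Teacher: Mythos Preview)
Your proposal is correct and follows essentially the same route as the paper: the paper states this lemma with the phrase ``In a similar way'' referring to the preceding computation, and your argument is precisely that computation---use $\mathcal{A}_X Y=\tfrac12\mathcal{V}[X,Y]$ for the vertical part, then for $Z\in\Gamma(\mathcal{D}_1)$ pass $J$ through the bracket via the K\"ahler condition, decompose $JY=BY+CY$, extract the horizontal components $\mathcal{A}_X BY+\mathcal{H}\nabla_X CY$, and project onto $\mathcal{D}_1$ via $P$. The only cosmetic difference is that the paper pairs against $JZ$ and moves $J$ to the other side with a sign, whereas you use the isometry identity $g_M([X,Y],Z)=g_M(J[X,Y],JZ)$; these are equivalent.
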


\begin{proposition}\label{prop:slant}
Let $F$ be a v-semi-slant submersion from an almost Hermitian manifold
$(M,g_M,J)$ onto a Riemannian manifold $(N,g_N)$. Then we obtain
$$
C^2 X = -\cos^2 \theta X \quad \text{for} \ X\in
\Gamma(\mathcal{D}_2),
$$
where $\theta$ denotes the v-semi-slant angle of $\mathcal{D}_2$.
\end{proposition}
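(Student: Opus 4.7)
The plan is to view $CX$ as the orthogonal projection of $JX$ onto $\mathcal{D}_2$ and then use the fact that the v-semi-slant angle is constant. For $X\in\Gamma(\mathcal{D}_2)$ nonzero, I decompose $JX=BX+CX$ and observe that $BX\in\Gamma(\ker F_*)$ is orthogonal to $(\ker F_*)^\perp\supset\mathcal{D}_2$, while $CX\in\Gamma(\mathcal{D}_2)$ by the relation $C\mathcal{D}_2\subset\mathcal{D}_2$ already recorded. Consequently $CX$ is precisely the orthogonal projection of $JX$ on $\mathcal{D}_2$, and since $J$ is an isometry,
\[
\cos\theta=\frac{|CX|}{|JX|}=\frac{|CX|}{|X|}, \qquad \text{so} \qquad |CX|^2=\cos^2\theta\,|X|^2.
\]

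Next I would compute the quadratic form $X\mapsto g_M(C^2 X,X)$. Using $J(CX)=B(CX)+C^2 X$ and the fact that $BCX\in\Gamma(\ker F_*)$ is orthogonal to $X\in\Gamma((\ker F_*)^\perp)$, together with $g_M(BX,CX)=0$, a short manipulation gives
\[
g_M(C^2 X,X)=-g_M(CX,JX)=-g_M(CX,BX+CX)=-|CX|^2=-\cos^2\theta\,|X|^2.
\]

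At this point I have the scalar identity $g_M(C^2 X,X)=-\cos^2\theta\,|X|^2$ on $\mathcal{D}_2$, but this is the main obstacle: it does not by itself yield the operator identity $C^2=-\cos^2\theta\,\mathrm{id}$ on $\mathcal{D}_2$. To upgrade it, I would show that $C$ restricted to $\mathcal{D}_2$ is skew-symmetric: for $X,Y\in\Gamma(\mathcal{D}_2)$, using $BX,BY\in\Gamma(\ker F_*)$ and the skew-symmetry of $J$ with respect to $g_M$, one gets $g_M(CX,Y)=g_M(JX,Y)=-g_M(X,JY)=-g_M(X,CY)$. Hence $C^2\big|_{\mathcal{D}_2}$ is a symmetric endomorphism. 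Diagonalizing, any unit eigenvector $X$ has eigenvalue $\lambda=g_M(C^2 X,X)=-\cos^2\theta$, so every eigenvalue equals $-\cos^2\theta$, and therefore $C^2 X=-\cos^2\theta\,X$ for all $X\in\Gamma(\mathcal{D}_2)$.
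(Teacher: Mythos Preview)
Your proof is correct and follows the same route as the paper: compute $|CX|=\cos\theta\,|X|$ from the projection interpretation, then obtain the scalar identity $g_M(C^2X,X)=-|CX|^2=-\cos^2\theta\,|X|^2$. In fact you are more careful than the paper, which passes directly from this scalar identity to $C^2X=-\cos^2\theta\,X$ without comment; your observation that $C|_{\mathcal{D}_2}$ is skew-symmetric (hence $C^2|_{\mathcal{D}_2}$ is symmetric and the scalar identity pins down all eigenvalues) is exactly what is needed to make that last step rigorous.
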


\begin{proof}
Since
$$
\cos \theta = \frac{g_M (JX, C X)}{|JX|\cdot|C X|} =
\frac{-g_M (X, C^2 X)}{|X|\cdot|C X|}
$$
and $\displaystyle{ \cos \theta = \frac{|C X|}{|JX|} }$, we have
$$
\cos^2 \theta = -\frac{g_M (X, C^2 X)}{|X|^2} \quad \text{for} \
X\in \Gamma(\mathcal{D}_2).
$$
Hence,
$$
C^2 X = -\cos^2 \theta X \quad \text{for} \ X\in
\Gamma(\mathcal{D}_2).
$$
\end{proof}

\begin{remark}
It is easy to see that the converse of Proposition
\ref{prop:slant} is also true.
\end{remark}

Assume that the v-semi-slant angle $\theta$ is not equal to
$\displaystyle{\frac{\pi}{2}}$ and define an endomorphism
$\widehat{J}$ of $(\ker F_*)^{\perp}$ by
$$
\widehat{J} := JP+\frac{1}{\cos \theta}CQ.
$$
Then,
\begin{eqnarray} \label{compst}
{\widehat{J}}^2 = -id \quad \text{on} \ (\ker F_*)^{\perp}.
\end{eqnarray}

From (\ref{compst}), we have

\begin{theorem}
Let $F$ be a v-semi-slant submersion from an almost Hermitian manifold
$(M,g_M,J)$ onto a Riemannian manifold $(N,g_N)$ with the v-semi-slant
angle $\theta\in [0,\frac{\pi}{2})$. Then $N$ is an even-dimensional
manifold.
\end{theorem}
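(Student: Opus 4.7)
The plan is to leverage equation (\ref{compst}), which says $\widehat{J}^2 = -\mathrm{id}$ on $(\ker F_*)^{\perp}$, to conclude that the horizontal distribution has even rank, and then transfer this to the dimension of $N$ via the submersion.

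First I would observe that because $F$ is a Riemannian submersion, for each $p \in M$ the differential $(F_*)_p$ restricts to a linear isometry from $((\ker F_*)^{\perp})_p$ onto $T_{F(p)} N$. In particular, $\dim N = \mathrm{rank}\,(\ker F_*)^{\perp}$. Hence it suffices to show the horizontal distribution has even rank at each point.

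Next, the hypothesis $\theta \in [0, \frac{\pi}{2})$ guarantees $\cos\theta \neq 0$, so the endomorphism $\widehat{J} = JP + \frac{1}{\cos\theta}\,CQ$ is well-defined on all of $(\ker F_*)^{\perp}$. By (\ref{compst}) it satisfies $\widehat{J}^2 = -\mathrm{id}$, so at each point $p$ it equips $((\ker F_*)^{\perp})_p$ with a linear complex structure. Since any real vector space admitting an endomorphism with square $-\mathrm{id}$ must have even dimension (its complexification splits into $\pm i$ eigenspaces of equal dimension, or equivalently it becomes a complex vector space), we conclude that $\dim ((\ker F_*)^{\perp})_p$ is even, and therefore so is $\dim N$.

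The only mild subtlety is making sure $\widehat{J}$ is actually defined on the entire horizontal space and not just on $\mathcal{D}_2$: one checks that on $\mathcal{D}_1$ the term $\frac{1}{\cos\theta}CQ$ vanishes and $JP$ acts as $J$ (which preserves $\mathcal{D}_1$), while on $\mathcal{D}_2$ the term $JP$ vanishes and $\frac{1}{\cos\theta}CQ$ acts via $C$ (which preserves $\mathcal{D}_2$ by the structural identities recorded earlier). Thus $\widehat{J}$ preserves each summand and the computation behind (\ref{compst}) really does yield $\widehat{J}^2 = -\mathrm{id}$ on the whole of $(\ker F_*)^{\perp}$. Beyond this verification, there is no genuine obstacle: the essential content is that a slant-type horizontal distribution with angle strictly less than $\pi/2$ inherits a canonical almost complex structure.
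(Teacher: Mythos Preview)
Your argument is correct and is exactly the approach the paper takes: the paper states the theorem as an immediate consequence of (\ref{compst}), i.e., of $\widehat{J}^2=-\mathrm{id}$ on $(\ker F_*)^{\perp}$, and you have simply spelled out the details (well-definedness of $\widehat{J}$ when $\theta<\frac{\pi}{2}$, even-dimensionality of a vector space carrying a complex structure, and identification of the horizontal space with $T_{F(p)}N$).
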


\begin{proposition}
Let $F$ be a  v-semi-slant submersion from a K\"{a}hler manifold
$(M,g_M,J)$ onto a Riemannian manifold $(N,g_N)$. Then the
distribution $\ker F_*$ defines a totally geodesic foliation if and
only if
$$
\omega (\widehat{\nabla}_X \phi Y+\mathcal{T}_X \omega
Y)+C(\mathcal{T}_X \phi Y+\mathcal{H}\nabla_X \omega Y) = 0 \quad
\text{for} \ X,Y\in \Gamma(\ker F_*).
$$
\end{proposition}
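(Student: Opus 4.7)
My plan is to recognize that $\ker F_*$ defines a totally geodesic foliation if and only if $\mathcal{H}\nabla_X Y = 0$ for all $X,Y\in \Gamma(\ker F_*)$, which (since $X$ is vertical) is the same as $\mathcal{T}_X Y = 0$. So the goal is to show that the displayed combination equals $-\mathcal{T}_X Y$ (or some nonzero scalar multiple thereof) up to vanishing terms.

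The natural starting point is Lemma 3.2(1), which, for $X,Y\in \Gamma(\ker F_*)$, gives the two identities
\begin{align*}
 \widehat{\nabla}_X \phi Y + \mathcal{T}_X \omega Y &= \phi\widehat{\nabla}_X Y + B\mathcal{T}_X Y, \\
 \mathcal{T}_X \phi Y + \mathcal{H}\nabla_X \omega Y &= \omega\widehat{\nabla}_X Y + C\mathcal{T}_X Y.
\end{align*}
Here the first identity takes values in $\Gamma(\ker F_*)$ and the second in $\Gamma((\ker F_*)^{\perp})$, so I can meaningfully apply $\omega$ to the first and $C$ to the second and add them.

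After this summation, the right-hand side becomes $(\omega\phi + C\omega)\widehat{\nabla}_X Y + (\omega B + C^2)\mathcal{T}_X Y$. Here I invoke the two structure identities listed in the preliminaries: $\omega\phi + C\omega = 0$ and $C^2 + \omega B = -\mathrm{id}$. The first kills the $\widehat{\nabla}_X Y$ term, and the second collapses the $\mathcal{T}_X Y$ term to $-\mathcal{T}_X Y$ (noting that $\mathcal{T}_X Y$ is horizontal, so $-\mathrm{id}$ acts on it as negation). Thus
$$
\omega(\widehat{\nabla}_X \phi Y + \mathcal{T}_X \omega Y) + C(\mathcal{T}_X \phi Y + \mathcal{H}\nabla_X \omega Y) = -\mathcal{T}_X Y,
$$
which gives both implications of the equivalence simultaneously.

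There is really no obstacle beyond careful bookkeeping: the only thing to check is that each term lives in the expected subbundle (so that applying $\omega$ or $C$ makes sense and the algebraic identities apply), and that $\mathcal{H}\nabla_X Y = \mathcal{T}_X Y$ when $X, Y$ are vertical, which follows directly from the definition of $\mathcal{T}$. The Kähler hypothesis enters only through Lemma 3.2, which is invoked as a black box.
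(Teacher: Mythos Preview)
Your argument is correct and is essentially the same computation as the paper's, just packaged differently: the paper writes $\nabla_X Y = -J\nabla_X JY$ directly, expands $JY=\phi Y+\omega Y$, decomposes, and reads off the horizontal part, whereas you route through the lemma (the paper's Lemma~3.3(1)) together with the algebraic identities $\omega\phi + C\omega = 0$ and $C^2+\omega B = -\mathrm{id}$ to arrive at the same identity $\omega(\widehat{\nabla}_X \phi Y+\mathcal{T}_X \omega Y)+C(\mathcal{T}_X \phi Y+\mathcal{H}\nabla_X \omega Y)=-\mathcal{T}_X Y$. Either way the displayed expression is exactly $-\mathcal{H}\nabla_X Y$, which gives the equivalence.
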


\begin{proof}
For $X,Y\in \Gamma(\ker F_*)$,
\begin{align*}
\nabla_X Y&= -J\nabla_X JY= -J(\widehat{\nabla}_X \phi Y+\mathcal{T}_X \phi Y+\mathcal{T}_X \omega Y
             +\mathcal{H}\nabla_X \omega Y)   \\
          &= -(\phi \widehat{\nabla}_X \phi Y+\omega\widehat{\nabla}_X \phi Y+B\mathcal{T}_X \phi Y
             +C\mathcal{T}_X \phi Y+\phi \mathcal{T}_X \omega Y+\omega \mathcal{T}_X \omega Y    \\
          & \ \ \  +B\mathcal{H}\nabla_X \omega Y+C\mathcal{H}\nabla_X \omega
             Y).
\end{align*}
Thus,
$$
\nabla_X Y\in \Gamma(\ker F_*) \Leftrightarrow
\omega(\widehat{\nabla}_X \phi Y+\mathcal{T}_X \omega
Y)+C(\mathcal{T}_X \phi Y+\mathcal{H}\nabla_X \omega Y) =0.
$$
\end{proof}

Similarly, we have

\begin{proposition}
Let $F$ be a  v-semi-slant submersion from a K\"{a}hler manifold
$(M,g_M,J)$ onto a Riemannian manifold $(N,g_N)$. Then the
distribution $(\ker F_*)^{\perp}$ defines a totally geodesic
foliation if and only if
$$
\phi(\mathcal{V}{\nabla}_X BY+\mathcal{A}_X CY)+B(\mathcal{A}_X
BY+\mathcal{H}\nabla_X CY) = 0 \quad \text{for} \ X,Y\in
\Gamma((\ker F_*)^{\perp}).
$$
\end{proposition}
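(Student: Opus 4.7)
The plan is to mirror the proof of the preceding proposition, swapping the roles of vertical and horizontal. Since $(M,g_M,J)$ is Kähler, $\nabla J = 0$, so for horizontal vector fields $X, Y$ we may write $\nabla_X Y = -J\nabla_X JY$. Decomposing $JY = BY + CY$ with $BY \in \Gamma(\ker F_*)$ and $CY \in \Gamma((\ker F_*)^{\perp})$, the task reduces to splitting $\nabla_X BY$ and $\nabla_X CY$ into vertical and horizontal parts and then applying $J$ via the $(\phi,\omega)$ and $(B,C)$ decompositions.

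First I would expand using the O'Neill tensor $\mathcal{A}$: since $X$ is horizontal, $\mathcal{H}\nabla_X BY = \mathcal{A}_X BY$ and $\mathcal{V}\nabla_X CY = \mathcal{A}_X CY$, giving
\begin{align*}
\nabla_X JY &= \mathcal{V}\nabla_X BY + \mathcal{A}_X BY + \mathcal{A}_X CY + \mathcal{H}\nabla_X CY.
\end{align*}
Next I would apply $-J$ term by term, using $JV = \phi V + \omega V$ for the vertical pieces $\mathcal{V}\nabla_X BY$ and $\mathcal{A}_X CY$, and $JH = BH + CH$ for the horizontal pieces $\mathcal{A}_X BY$ and $\mathcal{H}\nabla_X CY$. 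Collecting all the $\phi$-terms and $B$-terms (which lie in $\ker F_*$) against the $\omega$-terms and $C$-terms (which lie in $(\ker F_*)^{\perp}$) yields
\begin{align*}
\mathcal{V}\nabla_X Y &= -\bigl[\phi(\mathcal{V}\nabla_X BY + \mathcal{A}_X CY) + B(\mathcal{A}_X BY + \mathcal{H}\nabla_X CY)\bigr].
\end{align*}

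Finally, the distribution $(\ker F_*)^{\perp}$ is totally geodesic if and only if $\nabla_X Y$ remains horizontal for all horizontal $X, Y$, i.e. its vertical projection vanishes. Reading off the displayed formula gives the stated characterization. There is no real obstacle here beyond bookkeeping: the only thing to watch is which summands land in $\ker F_*$ versus $(\ker F_*)^{\perp}$ before applying $J$, so that $\phi,\omega,B,C$ are deployed on the correct pieces. The argument is strictly dual to the previous proposition and requires no further ingredient.
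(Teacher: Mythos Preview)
Your proof is correct and follows exactly the approach the paper intends (the paper omits the proof with ``Similarly, we have''): you use the K\"{a}hler identity $\nabla_X Y = -J\nabla_X JY$, split $JY = BY + CY$, decompose $\nabla_X(BY+CY)$ via the tensor $\mathcal{A}$, and then read off the vertical component after applying $J$ through $\phi,\omega,B,C$. This is the precise dual of the paper's proof of the preceding proposition, with $\mathcal{A}$ in place of $\mathcal{T}$ and the vertical projection in place of the horizontal one.
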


\begin{proposition}
Let $F$ be a  v-semi-slant submersion from a K\"{a}hler manifold
$(M,g_M,J)$ onto a Riemannian manifold $(N,g_N)$. Then the
distribution $\mathcal{D}_1$ defines a totally geodesic foliation if
and only if
$$
\phi\mathcal{A}_X JY+B\mathcal{H}\nabla_X JY = 0 \
\text{and} \ Q(\omega\mathcal{A}_X JY+C\mathcal{H}\nabla_X JY) =
0
$$
for $X,Y\in \Gamma(\mathcal{D}_1)$.
\end{proposition}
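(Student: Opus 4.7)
The strategy is to mimic the pattern of the previous two propositions: start from the K\"ahler identity $\nabla_X Y = -J\nabla_X JY$, decompose the right-hand side using the tensors $\mathcal{A}, \mathcal{H}$ and the splittings $J|_{\ker F_*}=\phi+\omega$, $J|_{(\ker F_*)^{\perp}}=B+C$, and then read off the conditions for $\nabla_X Y$ to lie in $\mathcal{D}_1$.

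First I would observe that since $X,Y\in\Gamma(\mathcal{D}_1)\subset\Gamma((\ker F_*)^{\perp})$, the condition that $\mathcal{D}_1$ defines a totally geodesic foliation is equivalent to $\nabla_X Y\in\Gamma(\mathcal{D}_1)$, i.e.\ both $\mathcal{V}\nabla_X Y=0$ and $Q\,\mathcal{H}\nabla_X Y=0$, because $(\ker F_*)^{\perp}=\mathcal{D}_1\oplus\mathcal{D}_2$ and $\mathcal{D}_1=\ker Q$ in $(\ker F_*)^{\perp}$. Since $J(\mathcal{D}_1)=\mathcal{D}_1$, the field $JY$ is again in $\Gamma(\mathcal{D}_1)$, so the K\"ahler condition $\nabla J=0$ gives $\nabla_X Y = -J\nabla_X JY$.

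Next, because both $X$ and $JY$ are horizontal, the definition of $\mathcal{A}$ collapses to $\mathcal{A}_X JY=\mathcal{V}\nabla_X JY$, so
\[
\nabla_X JY = \mathcal{A}_X JY + \mathcal{H}\nabla_X JY,
\]
with $\mathcal{A}_X JY\in\Gamma(\ker F_*)$ and $\mathcal{H}\nabla_X JY\in\Gamma((\ker F_*)^{\perp})$. Applying $J$ and using $J=\phi+\omega$ on the vertical summand and $J=B+C$ on the horizontal one yields
\[
\nabla_X Y = -\bigl(\phi\mathcal{A}_X JY + B\mathcal{H}\nabla_X JY\bigr) - \bigl(\omega\mathcal{A}_X JY + C\mathcal{H}\nabla_X JY\bigr),
\]
where the first bracket is the vertical component and the second is the horizontal component of $\nabla_X Y$.

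Finally I would extract the characterization: the vertical component vanishes iff $\phi\mathcal{A}_X JY+B\mathcal{H}\nabla_X JY=0$, and the horizontal component lies in $\mathcal{D}_1$ (equivalently, has zero $Q$-part) iff $Q(\omega\mathcal{A}_X JY+C\mathcal{H}\nabla_X JY)=0$. These are exactly the two stated conditions. The only subtle point worth double-checking is the bookkeeping: making sure that $\mathcal{A}_X JY$ really is vertical (which uses that both $X$ and $JY$ are horizontal, so that the two terms in the definition of $\mathcal{A}$ reduce to a single vertical one) and that the remaining horizontal part need only be projected by $Q$, not required to vanish outright, since the $P$-part already lies in $\mathcal{D}_1$.
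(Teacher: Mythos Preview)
Your proof is correct and follows exactly the paper's approach: apply the K\"ahler identity $\nabla_X Y=-J\nabla_X JY$, decompose $\nabla_X JY=\mathcal{A}_X JY+\mathcal{H}\nabla_X JY$, apply $J=\phi+\omega$ and $J=B+C$ componentwise, and read off the vertical and $\mathcal{D}_2$-components. Your write-up is in fact more explicit than the paper's about why $\mathcal{A}_X JY$ is vertical and why only the $Q$-projection of the horizontal part is required to vanish.
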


\begin{proof}
For $X,Y\in \Gamma(\mathcal{D}_1)$, we get
\begin{align*}
\nabla_X Y&= -J\nabla_X JY= -J(\mathcal{A}_X JY+\mathcal{H}\nabla_X JY)   \\
          &= -(\phi\mathcal{A}_X JY+\omega\mathcal{A}_X JY+B\mathcal{H}\nabla_X JY+C\mathcal{H}\nabla_X JY).
\end{align*}
Hence,
$$
\nabla_X Y\in \Gamma(\mathcal{D}_1) \Leftrightarrow \phi\mathcal{A}_X JY+B\mathcal{H}\nabla_X JY = 0 \
\text{and} \ Q(\omega\mathcal{A}_X JY+C\mathcal{H}\nabla_X JY) =
0.
$$
\end{proof}

In a similar way, we obtain

\begin{proposition}
Let $F$ be a  v-semi-slant submersion from a K\"{a}hler manifold
$(M,g_M,J)$ onto a Riemannian manifold $(N,g_N)$. Then the
distribution $\mathcal{D}_2$ defines a totally geodesic foliation if
and only if
\begin{align*}
  &\phi(\mathcal{V}\nabla_X BY+\mathcal{A}_X CY)+B(\mathcal{A}_X BY+\mathcal{H}\nabla_X CY) = 0     \\
  &P(\omega(\mathcal{V}\nabla_X BY+\mathcal{A}_X CY)+C(\mathcal{A}_X BY+\mathcal{H}\nabla_X CY)) = 0
\end{align*}
for $X,Y\in \Gamma(\mathcal{D}_2)$.
\end{proposition}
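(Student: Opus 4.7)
The plan is to mimic the proof of the preceding proposition on $\mathcal{D}_1$, taking advantage of the Kähler condition $\nabla J=0$. For $X,Y\in \Gamma(\mathcal{D}_2)\subset \Gamma((\ker F_*)^\perp)$, I would start from $\nabla_X Y = -J\nabla_X JY$ and decompose $JY = BY+CY$ with $BY\in \Gamma(\ker F_*)$ and $CY\in \Gamma((\ker F_*)^\perp)$, so that $\nabla_X JY = \nabla_X BY+\nabla_X CY$.

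Next, I would use that $X$ is horizontal together with the definitions of the O'Neill tensors to split each term according to its vertical and horizontal parts: $\nabla_X BY = \mathcal{A}_X BY+\mathcal{V}\nabla_X BY$ (horizontal part $\mathcal{A}_X BY$, vertical part $\mathcal{V}\nabla_X BY$) and $\nabla_X CY = \mathcal{H}\nabla_X CY+\mathcal{A}_X CY$ (horizontal part $\mathcal{H}\nabla_X CY$, vertical part $\mathcal{A}_X CY$). Grouping, $\nabla_X JY$ has vertical component $\mathcal{V}\nabla_X BY+\mathcal{A}_X CY$ and horizontal component $\mathcal{A}_X BY+\mathcal{H}\nabla_X CY$. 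Applying $-J$ and expanding via $JU=\phi U+\omega U$ on vertical $U$ and $JW=BW+CW$ on horizontal $W$, the four resulting pieces reassemble into
\[
\mathcal{V}\nabla_X Y = -\phi(\mathcal{V}\nabla_X BY+\mathcal{A}_X CY) - B(\mathcal{A}_X BY+\mathcal{H}\nabla_X CY),
\]
\[
\mathcal{H}\nabla_X Y = -\omega(\mathcal{V}\nabla_X BY+\mathcal{A}_X CY) - C(\mathcal{A}_X BY+\mathcal{H}\nabla_X CY).
\]

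The conclusion then falls out by observing that $\nabla_X Y\in \Gamma(\mathcal{D}_2)$ is equivalent to two separate conditions: the vertical component must vanish, giving the first displayed equation, and the horizontal component must lie in $\mathcal{D}_2$, i.e.\ have trivial $\mathcal{D}_1$-projection, which after applying $P$ yields the second displayed equation. I expect no real obstacle beyond careful book-keeping; the only point requiring attention, in contrast with the $\mathcal{D}_1$ case, is that $\mathcal{D}_2$ is a \emph{proper} subbundle of $(\ker F_*)^\perp$, so I must retain the projection $P$ on the horizontal side rather than demanding the whole horizontal component vanish.
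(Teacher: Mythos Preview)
Your proposal is correct and is exactly the argument the paper has in mind: the paper omits the proof with ``In a similar way, we obtain,'' and your computation is the natural adaptation of the $\mathcal{D}_1$ and $(\ker F_*)^\perp$ cases, splitting $JY=BY+CY$, using the O'Neill decompositions for a horizontal $X$, and then reading off the $\ker F_*$-component and the $P$-projection of the horizontal component. Your remark that, unlike the $\mathcal{D}_1$ case, one must keep the projection $P$ rather than require the full horizontal part to vanish is precisely the point.
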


\begin{theorem}
Let $F$ be a  v-semi-slant submersion from a K\"{a}hler manifold
$(M,g_M,J)$ onto a Riemannian manifold $(N,g_N)$. Then $F$ is a
totally geodesic map if and only if
\begin{align*}
  &\omega(\widehat{\nabla}_X \phi Y+\mathcal{T}_X \omega Y)+C(\mathcal{T}_X \phi Y
    +\mathcal{H}\nabla_X \omega Y) = 0   \\
  &\omega(\widehat{\nabla}_X BZ+\mathcal{T}_X CZ)+C(\mathcal{T}_X BZ
    +\mathcal{H}\nabla_X CZ) = 0
\end{align*}
for $X,Y\in \Gamma(\ker F_*)$ and $Z\in \Gamma((\ker F_*)^{\perp})$.
\end{theorem}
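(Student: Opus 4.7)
The plan is to exploit that the second fundamental form $(\nabla F_*)$ is a symmetric $C^\infty(M)$-bilinear tensor on $TM$, so $F$ is totally geodesic iff $(\nabla F_*)(U,V)=0$ for each pair drawn from the orthogonal splitting $TM=\ker F_* \oplus (\ker F_*)^{\perp}$. This yields three cases up to symmetry: both arguments vertical, both horizontal, and one of each. My job is to show that the horizontal-horizontal case is automatic, and that the remaining two cases translate precisely into the two stated identities.

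The horizontal-horizontal case needs no hypothesis. By the standard Riemannian-submersion identity $F_*(\mathcal{H}\nabla_Z W)=\nabla^F_Z F_*W$, and because $\mathcal{V}\nabla_Z W$ is vertical and therefore annihilated by $F_*$, one gets $(\nabla F_*)(Z,W)=0$ for all $Z,W\in\Gamma((\ker F_*)^\perp)$. Tensoriality of $(\nabla F_*)$ removes the basic-field restriction.

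For the vertical-vertical case take $X,Y\in\Gamma(\ker F_*)$. Since $F_*Y=0$, the definition collapses to $(\nabla F_*)(X,Y)=-F_*(\nabla_X Y)$, and this vanishes iff $\mathcal{H}\nabla_X Y=0$ because $F_*$ is a bijection on the horizontal distribution. I would now reproduce the computation from the earlier proposition characterizing when $\ker F_*$ is a totally geodesic foliation: apply the Kähler identity $\nabla_X Y=-J\nabla_X JY$, expand $JY=\phi Y+\omega Y$, split $\nabla_X\phi Y=\widehat{\nabla}_X\phi Y+\mathcal{T}_X\phi Y$ and $\nabla_X\omega Y=\mathcal{T}_X\omega Y+\mathcal{H}\nabla_X\omega Y$, and then decompose each term using $J=\phi+\omega$ on vertical pieces and $J=B+C$ on horizontal pieces. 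Collecting only the horizontal components produces exactly $\omega(\widehat{\nabla}_X\phi Y+\mathcal{T}_X\omega Y)+C(\mathcal{T}_X\phi Y+\mathcal{H}\nabla_X\omega Y)$, so its vanishing is equivalent to the first stated condition.

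For the mixed case take $X\in\Gamma(\ker F_*)$ and $Z\in\Gamma((\ker F_*)^{\perp})$. Since $F_*X=0$, the pullback-connection term $\nabla^F_X F_*Z$ vanishes, so once again $(\nabla F_*)(X,Z)=-F_*(\nabla_X Z)$, and it suffices to force $\mathcal{H}\nabla_X Z=0$. Writing $JZ=BZ+CZ$ with $BZ$ vertical and $CZ$ horizontal, I would split $\nabla_X BZ=\widehat{\nabla}_X BZ+\mathcal{T}_X BZ$ and $\nabla_X CZ=\mathcal{T}_X CZ+\mathcal{H}\nabla_X CZ$; the only point that requires attention is that $\mathcal{T}_X CZ$ is vertical (since $X$ is vertical and $CZ$ is horizontal), while $\mathcal{T}_X BZ$ is horizontal. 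Applying $J$ term-by-term and keeping only the horizontal components yields $\omega(\widehat{\nabla}_X BZ+\mathcal{T}_X CZ)+C(\mathcal{T}_X BZ+\mathcal{H}\nabla_X CZ)$, which is the second stated condition. The remaining order $(\nabla F_*)(Z,X)$ is automatic by symmetry of the second fundamental form. The whole argument is routine bookkeeping in the spirit of the preceding totally-geodesic-foliation propositions; the only genuinely delicate point, and thus the main place to be careful, is tracking where the tensor $\mathcal{T}$ lands after each decomposition, which is precisely what makes the two identities assemble into the clean form $\omega(\cdots)+C(\cdots)$.
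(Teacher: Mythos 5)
Your proposal is correct and follows essentially the same route as the paper: the horizontal--horizontal part of $(\nabla F_*)$ vanishes because $F$ is a Riemannian submersion, and the vertical--vertical and mixed cases are handled by the K\"ahler identity $\nabla_X Y=-J\nabla_X JY$, the decompositions $J=\phi+\omega$ and $J=B+C$, and the $\mathcal{T}$, $\widehat{\nabla}$, $\mathcal{H}\nabla$ splittings, with symmetry of $(\nabla F_*)$ disposing of the reversed mixed argument exactly as in the paper. Your bookkeeping of where $\mathcal{T}_X BZ$ and $\mathcal{T}_X CZ$ land matches the paper's computation, so no changes are needed.
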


\begin{proof}
Since $F$ is a Riemannian submersion, we obtain
$$
(\nabla F_*)(Z_1,Z_2)=0 \quad \text{for} \ Z_1,Z_2\in \Gamma((\ker
F_*)^{\perp}).
$$
For $X,Y\in \Gamma(\ker F_*)$, we have
\begin{align*}
(\nabla F_*)(X,Y)&= -F_* (\nabla_X Y) = F_* (J\nabla_X (\phi Y+\omega Y))   \\
          &= F_* (\phi \widehat{\nabla}_X \phi Y+\omega\widehat{\nabla}_X \phi Y+B\mathcal{T}_X \phi Y
             +C\mathcal{T}_X \phi Y+\phi \mathcal{T}_X \omega Y+\omega \mathcal{T}_X \omega Y   \\
          & \ \ \  +B\mathcal{H}\nabla_X \omega Y+C\mathcal{H}\nabla_X \omega Y).
\end{align*}
Thus,
$$
(\nabla F_*)(X,Y) = 0 \Leftrightarrow \omega(\widehat{\nabla}_X \phi
Y+\mathcal{T}_X \omega Y)+C(\mathcal{T}_X \phi Y
    +\mathcal{H}\nabla_X \omega Y) = 0.
$$
For $X\in \Gamma(\ker F_*)$ and $Z\in \Gamma((\ker F_*)^{\perp})$,
 we get
\begin{align*}
(\nabla F_*)(X,Z)&= -F_* (\nabla_X Z) = F_* (J\nabla_X (BZ+CZ))   \\
          &= F_* (\phi \widehat{\nabla}_X BZ+\omega\widehat{\nabla}_X BZ+B\mathcal{T}_X BZ
             +C\mathcal{T}_X BZ+\phi \mathcal{T}_X CZ+\omega \mathcal{T}_X CZ   \\
          & \ \ \  +B\mathcal{H}\nabla_X CZ+C\mathcal{H}\nabla_X CZ).
\end{align*}
Hence,
$$
(\nabla F_*)(X,Z) = 0 \Leftrightarrow \omega(\widehat{\nabla}_X
BZ+\mathcal{T}_X CZ)+C(\mathcal{T}_X BZ
    +\mathcal{H}\nabla_X CZ) = 0.
$$
Since $(\nabla F_*)(X,Z)=(\nabla F_*)(Z,X)$, the result follows.
\end{proof}

Let $F : (M,g_M)\mapsto (N,g_N)$ be a Riemannian submersion. The map
$F$ is called a Riemannian submersion {\em with totally umbilical
fibers} if
\begin{eqnarray}\label{umbilic}
\mathcal{T}_X Y = g_M (X, Y)H \quad \text{for} \ X,Y\in \Gamma(\ker
F_*),
\end{eqnarray}
where $H$ is the mean curvature vector field of the fiber.

Then we obtain

\begin{lemma}
Let $F$ be a v-semi-slant submersion with totally umbilical fibers
from a K\"{a}hler manifold $(M,g_M,J)$ onto a Riemannian manifold
$(N,g_N)$. Then we have
$$
H\in \Gamma(\mathcal{D}_2).
$$
\end{lemma}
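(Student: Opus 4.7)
The idea is to feed a carefully chosen vertical test vector into the second identity of part~(1) of the preceding lemma; the umbilical hypothesis collapses several terms and leaves a direct linear constraint on $CH$. Concretely, I would select a local unit section $X\in\Gamma(\mu)$, where $\mu$ is the $J$-invariant orthogonal complement of $B\mathcal{D}_2$ inside $\ker F_*$ identified just before the lemma. By construction $JX\in\Gamma(\mu)\subset\Gamma(\ker F_*)$, hence $\phi X=JX$ and $\omega X=0$. Combining this with the umbilical assumption yields $\mathcal{T}_X X=H$ and $\mathcal{T}_X\phi X=g_M(X,JX)H=0$, the last equality using the skew-symmetry of $J$ relative to $g_M$.

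Next I would substitute $Y=X$ into
\begin{equation*}
\mathcal{T}_X\phi Y+\mathcal{H}\nabla_X\omega Y=\omega\widehat{\nabla}_X Y+C\mathcal{T}_X Y.
\end{equation*}
The left-hand side becomes $0+0=0$, while the right-hand side reduces to $\omega\widehat{\nabla}_X X+CH$. Hence $CH=-\omega\widehat{\nabla}_X X$, and because $\omega(\ker F_*)=\mathcal{D}_2$ we immediately read off that $CH\in\Gamma(\mathcal{D}_2)$.

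To conclude, I would decompose $H=PH+QH$ with $PH\in\Gamma(\mathcal{D}_1)$ and $QH\in\Gamma(\mathcal{D}_2)$. Since $B\mathcal{D}_1=0$, the operator $C$ restricted to $\mathcal{D}_1$ coincides with $J$ and is therefore invertible, while $C$ preserves $\mathcal{D}_2$. The splitting $CH=CPH+CQH\in\mathcal{D}_1\oplus\mathcal{D}_2$ combined with $CH\in\mathcal{D}_2$ forces $CPH=0$, so $PH=0$ and $H=QH\in\Gamma(\mathcal{D}_2)$.

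The only delicate point is the tacit assumption that $\mu$ admits a unit local section. In the degenerate case $\mu=\{0\}$, so that $\ker F_*=B\mathcal{D}_2$, the same identity would instead be tested on pairs $X=BZ_1$, $Y=BZ_2$ with $Z_i\in\Gamma(\mathcal{D}_2)$, projected onto $\mathcal{D}_1$, and polarized in $Z_1,Z_2$ to extract the same conclusion; this edge case is where the bulk of the bookkeeping would lie, whereas in the generic situation the argument above amounts to a single substitution followed by a short linear-algebra observation.
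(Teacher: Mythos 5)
Your argument is correct and is essentially the paper's own proof in a lightly repackaged form: both evaluate the horizontal component of the K\"ahler identity $\nabla_X JY=J\nabla_X Y$ on sections of $\mu$ (i.e.\ the second identity of part (1) of the Lemma), insert the umbilicity condition $\mathcal{T}_XY=g_M(X,Y)H$, and use $\omega(\ker F_*)=\mathcal{D}_2$ together with the behaviour of $C$ on $\mathcal{D}_1$ to kill the $\mathcal{D}_1$-component of $H$ --- the paper simply pairs with $W\in\Gamma(\mathcal{D}_1)$ and polarizes in $X,Y$ where you set $Y=X$ and invoke injectivity of $C$ on $\mathcal{D}_1$. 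The tacit assumption $\mu\neq\{0\}$ that you flag is equally present in the paper's proof, which likewise tests only vectors $X,Y\in\Gamma(\mu)$.
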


\begin{proof}
For $X,Y\in \Gamma(\mu)$ and $W\in \Gamma(\mathcal{D}_1)$, we get
$$
\mathcal{T}_X JY+\widehat{\nabla}_X JY = \nabla_X JY = J\nabla_X Y =
B\mathcal{T}_X Y+C\mathcal{T}_X Y+\phi\widehat{\nabla}_X
Y+\omega\widehat{\nabla}_X Y.
$$
Using (\ref{umbilic}), we easily obtain
$$
g_M (X, JY) g_M (H, W) = -g_M (X, Y) g_M (H, JW).
$$
Interchanging the role of $X$ and $Y$, we get
$$
g_M (Y, JX) g_M (H, W) = -g_M (Y, X) g_M (H, JW)
$$
so that combining the above two equations, we have
$$
g_M (X, Y) g_M (H, JW) = 0,
$$
which means $H\in \Gamma(\mathcal{D}_2)$.
\end{proof}

\begin{corollary}
Let $F$ be a v-semi-slant submersion with totally umbilical fibers
from a K\"{a}hler manifold $(M,g_M,J)$ onto a Riemannian manifold
$(N,g_N)$ such that $\mathcal{D}_1=(\ker F_*)^{\perp}$. Then each fiber is minimal.
\end{corollary}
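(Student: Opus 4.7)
The plan is to deduce the corollary as an essentially immediate consequence of the preceding lemma, which asserts that under the hypotheses of a totally umbilical v-semi-slant submersion from a K\"ahler manifold, the mean curvature vector $H$ of each fiber lies in $\Gamma(\mathcal{D}_2)$.

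First I would unpack the additional hypothesis $\mathcal{D}_1=(\ker F_*)^{\perp}$. Since by definition of a v-semi-slant submersion we have the orthogonal decomposition $(\ker F_*)^{\perp}=\mathcal{D}_1\oplus\mathcal{D}_2$, the hypothesis forces $\mathcal{D}_2=\{0\}$. Next I would apply the preceding lemma to obtain $H\in\Gamma(\mathcal{D}_2)=\{0\}$, whence $H\equiv 0$ on each fiber. By definition of the mean curvature vector, this is precisely the statement that each fiber is a minimal submanifold of $(M,g_M)$.

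There is no real obstacle here: the corollary is a direct logical consequence of the lemma combined with the dimensional degeneration of $\mathcal{D}_2$ imposed by the hypothesis. The only point that deserves a brief remark is that the K\"ahler and totally umbilical assumptions are inherited from the lemma, so no additional verification is needed; one simply cites the lemma and observes that its conclusion collapses to $H=0$ under the present hypothesis.
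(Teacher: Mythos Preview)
Your proposal is correct and matches the paper's intended argument: the corollary is stated immediately after the lemma with no separate proof, precisely because it follows by observing that $\mathcal{D}_1=(\ker F_*)^{\perp}$ forces $\mathcal{D}_2=\{0\}$, whence the lemma's conclusion $H\in\Gamma(\mathcal{D}_2)$ gives $H=0$.
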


\begin{remark}
Let $F$ be a v-semi-slant submersion from a K\"{a}hler manifold $(M,
g_M, J)$ onto a Riemannian manifold $(N, g_N)$. Then there is a
distribution $\mathcal{D}_1\subset (\ker F_*)^{\perp}$ such that
$$
(\ker F_*)^{\perp} =\mathcal{D}_1\oplus \mathcal{D}_2, \
J(\mathcal{D}_1)=\mathcal{D}_1,
$$
and the angle $\theta=\theta(X)$ between $JX$ and the space
$(\mathcal{D}_2)_p$ is constant for nonzero $X\in (\mathcal{D}_2)_p$
and $p\in M$, where $\mathcal{D}_2$ is the orthogonal complement of
$\mathcal{D}_1$ in $(\ker F_*)^{\perp}$. Furthermore,
$$
C \mathcal{D}_2 \subset \mathcal{D}_2, \quad B \mathcal{D}_2
\subset \ker F_*, \quad \ker F_*=B\mathcal{D}_2\oplus \mu,
$$
where $\mu$ is the orthogonal complement of $B \mathcal{D}_2$
in $\ker F_*$ and is invariant  under $J$. For the curvature tensor, it is sufficient to calculate the
holomorphic sectional curvatures in a K\"{a}hler manifold.

Given a plane $P$ being invariant by $J$ in $T_p M$, $p\in M$, there
is an orthonormal basis $\{ X, JX \}$ of $P$. Denote by $K(P)$,
$K_*(P)$, and $\widehat{K}(P)$ the sectional curvatures of the plane
$P$ in $M$, $N$, and the fiber $F^{-1}(F(p))$, respectively, where
$K_*(P)$ denotes the sectional curvature of the plane $P_* = < F_*X,
F_*JX > $ in $N$. Let $K(X\wedge Y)$ be the sectional curvature of
the plane spanned by the tangent vectors $X, Y\in T_p M$, $p\in M$.
Using both Corollary 1 of [14, p.465] and (1.28) of [7, p.13], we
obtain the following :

\begin{enumerate}

\item If $P\subset (\mu)_p$, then with some computations
we have
$$
K(P)=\widehat{K}(P)+|\mathcal{T}_X X|^2-|\mathcal{T}_X JX|^2-g_M
(\mathcal{T}_X X, J[JX, X]).
$$

\item If $P\subset (\mathcal{D}_2\oplus B\mathcal{D}_2)_p$ with $X\in
(\mathcal{D}_2)_p$, then we get

\begin{align*}
K(P)&= \sin^2\theta \cdot K(X\wedge B X)+2(g_M ((\nabla_X
\mathcal{A})(X,CX), BX)+g_M (\mathcal{A}_X CX, \mathcal{T}_{BX} X)   \\
    & \ \ \ -g_M (\mathcal{A}_{CX} X, \mathcal{T}_{BX} X)-g_M (\mathcal{A}_{X} X, \mathcal{T}_{BX} CX))  +\cos^2\theta \cdot K(X\wedge CX).
\end{align*}

\item If $P\subset (\mathcal{D}_1)_p$, then we obtain
$$
K(P)= K_* (P)-3|\mathcal{V}J\nabla_X X|^2.
$$
\end{enumerate}
\end{remark}

\section{Examples}\label{exam}

\begin{example}
Let $(M,g_M,J)$ be an almost Hermitian manifold. Let $\pi : TM \mapsto M$ be the natural projection.
Then the map $\pi$ is a v-semi-slant submersion such that $\mathcal{D}_1=(\ker \pi_*)^{\perp}$ \cite{FIP}.
\end{example}

\begin{example}
Let $(M,g_M,J)$ be a $2m$-dimensional almost Hermitian manifold and $(N,g_N)$ a $(2m-1)$-dimensional Riemannian manifold.
Let $F$ be a Riemannian submersion from an almost Hermitian manifold $(M,
g_M, J)$ onto a Riemannian manifold $(N, g_N)$. Then the map $F$ is a v-semi-slant submersion such that
$$
\mathcal{D}_1=((\ker F_*)\oplus J(\ker F_*))^{\perp} \quad \text{and} \quad \mathcal{D}_2=J(\ker F_*)
$$
with the v-semi-slant angle $\theta=\frac{\pi}{2}$.
\end{example}

\begin{example}
Define a map $F : \mathbb{R}^6 \mapsto \mathbb{R}^4$ by
$$
F(x_1,x_2,\cdots, x_6) = (x_1,x_3 \sin \alpha-x_5 \cos \alpha,x_6,x_2),
$$
where $\alpha\in (0,\frac{\pi}{2})$. Then the map $F$ is a
v-semi-slant submersion such that
$$
\mathcal{D}_1 = <\frac{\partial}{\partial x_1},
\frac{\partial}{\partial x_2}> \ \text{and} \ \mathcal{D}_2 =
<\frac{\partial}{\partial x_6}, \sin \alpha \frac{\partial}{\partial
x_3}-\cos \alpha \frac{\partial}{\partial x_5}>
$$
with the v-semi-slant angle $\theta=\alpha$.
\end{example}

\begin{example}
Define a map $F : \mathbb{R}^8 \mapsto \mathbb{R}^4$ by
$$
F(x_1,x_2,\cdots, x_8) = (x_4,x_3,\frac{x_5-x_8}{\sqrt{2}},x_6).
$$
Then the map $F$ is a v-semi-slant submersion such that
$$
\mathcal{D}_1 = <\frac{\partial}{\partial
x_3},\frac{\partial}{\partial x_4}> \ \text{and} \ \mathcal{D}_2 =
<\frac{\partial}{\partial x_6}, \frac{\partial}{\partial
x_5}-\frac{\partial}{\partial x_8}>
$$
with the v-semi-slant angle $\theta=\frac{\pi}{4}$.
\end{example}

\begin{example}
Define a map $F : \mathbb{R}^{12} \mapsto \mathbb{R}^5$ by
$$
F(x_1,x_2,\cdots, x_{12}) =
(x_2,\frac{x_5+x_6}{\sqrt{2}},\frac{x_7+x_9}{\sqrt{2}},\frac{x_8+x_{10}}{\sqrt{2}},x_1).
$$
Then the map $F$ is a v-semi-slant submersion such that
$$
\mathcal{D}_1 = <\frac{\partial}{\partial x_1},
\frac{\partial}{\partial x_2},\frac{\partial}{\partial
x_7}+\frac{\partial}{\partial x_9},\frac{\partial}{\partial
x_8}+\frac{\partial}{\partial x_{10}}> \ \text{and} \ \mathcal{D}_2
= <\frac{\partial}{\partial x_5}+\frac{\partial}{\partial x_6}>
$$
with the v-semi-slant angle $\theta=\frac{\pi}{2}$.
\end{example}

\begin{example}
Define a map $F : \mathbb{R}^{10} \mapsto \mathbb{R}^6$ by
$$
F(x_1,x_2,\cdots, x_{10}) =
(\frac{x_3-x_5}{\sqrt{2}},x_6,\frac{x_7+x_9}{\sqrt{2}},x_8,x_1,x_2).
$$
Then the map $F$ is a v-semi-slant submersion such that
$$
\mathcal{D}_1 = <\frac{\partial}{\partial x_1},
\frac{\partial}{\partial x_2}> \ \text{and} \ \mathcal{D}_2 =
<\frac{\partial}{\partial x_6},\frac{\partial}{\partial x_8},\frac{\partial}{\partial x_3}-\frac{\partial}{\partial x_5},
\frac{\partial}{\partial x_7}+\frac{\partial}{\partial
x_9}>
$$
with the v-semi-slant angle $\theta=\frac{\pi}{4}$.
\end{example}

\begin{example}
Define a map $F : \mathbb{R}^8 \mapsto \mathbb{R}^4$ by
$$
F(x_1,x_2,\cdots, x_8) = (x_1,x_3 \cos \alpha-x_5 \sin
\alpha,x_2,x_4\sin \beta+x_6\cos \beta),
$$
where $\alpha$ and $\beta$ are constant. Then the map $F$ is a
v-semi-slant submersion such that
$$
\mathcal{D}_1 = <\frac{\partial}{\partial
x_1},\frac{\partial}{\partial x_2}> \ \text{and} \ \mathcal{D}_2 =
<\cos \alpha\frac{\partial}{\partial x_3}-\sin
\alpha\frac{\partial}{\partial x_5}, \sin
\beta\frac{\partial}{\partial x_4}+\cos
\beta\frac{\partial}{\partial x_6}>
$$
with the v-semi-slant angle $\theta$ with $\cos\theta=|\sin
(\alpha-\beta)|$.
\end{example}


\begin{thebibliography}{30}
\addcontentsline{toc}{section}{References}




\bibitem{B}
A. Bejancu,
\newblock {\em Geometry of CR-submaniflods},
\newblock Kluwer Academic, 1986.





\bibitem{BL}
J. P. Bourguignon, H. B. Lawson,
\newblock {\em A mathematician's visit to Kaluza-Klein theory},
\newblock Rend. Semin. Mat. Torino Fasc. Spec. (1989), 143-163.




\bibitem{BL2}
J. P. Bourguignon, H. B. Lawson,
\newblock {\em Stability and isolation phenomena for Yang-mills fields},
\newblock Commum. Math. Phys. (1981), \textbf{79}, 189-230.



\bibitem{BW}
P. Baird, J. C. Wood,
\newblock {\em Harmonic morphisms between Riemannian manifolds},
\newblock Oxford science publications, 2003.




\bibitem{C}
B. Y. Chen,
\newblock {\em Geometry of slant submaniflods},
\newblock Katholieke Universiteit Leuven, Leuven, 1990.




\bibitem{CMMS}
V. Cort\'{e}s, C. Mayer, T. Mohaupt, F. Saueressig,
\newblock {\em Special geometry of Euclidean supersymmetry 1. Vector multiplets},
\newblock J. High Energy Phys. (2004), \textbf{03}, 028.



\bibitem{FIP}
    M. Falcitelli, S. Ianus, and A. M. Pastore,
    \newblock {\em Riemannian submersions and related topics},
    \newblock World Scientific Publishing Co., 2004.





\bibitem{G}
A. Gray,
\newblock {\em Pseudo-Riemannian almost product manifolds and submersions},
\newblock J. Math. Mech, (1967), \textbf{16}, 715-737.





\bibitem{IIMV}
S. Ianus, A. M. Ionescu, R. Mazzocco, G. E. Vilcu,
\newblock {\em Riemannian submersions from almost contact metric manifolds },
\newblock arXiv: 1102.1570v1 [math. DG].





\bibitem{IMV}
S. Ianus, R. Mazzocco, G. E. Vilcu,
\newblock {\em Riemannian submersions from quaternionic manifolds },
\newblock Acta. Appl. Math.(2008) \textbf{104}, 83-89.






\bibitem{IV}
S. Ianus, M. Visinescu,
\newblock {\em Kaluza-Klein theory with scalar fields and generalized Hopf manifolds},
\newblock Class. Quantum Gravity (1987), \textbf{4}, 1317-1325.




\bibitem{IV2}
S. Ianus, M. Visinescu,
\newblock {\em Space-time compactification and Riemannian submersions},
\newblock In: Rassias, G.(ed.) The Mathematical Heritage of C. F. Gauss,
 (1991), 358-371, World Scientific, River Edge.




\bibitem{M}
M. T. Mustafa,
\newblock {\em Applications of harmonic morphisms to gravity},
\newblock J. Math. Phys. (2000), \textbf{41}(10), 6918-6929.





\bibitem{O}
B. O'Neill,
\newblock {\em The fundamental equations of a submersion},
\newblock Mich. Math. J., (1966), \textbf{13}, 458-469.




\bibitem{P}
K. S. Park,
\newblock {\em H-slant submersions},
\newblock Bull. Korean Math. Soc. (2012), \textbf{49} No. 2, 329-338.




\bibitem{P2}
K. S. Park,
\newblock {\em H-semi-invariant submersions},
\newblock Taiwan. J. Math., Accepted.



\bibitem{P3}
K. S. Park,
\newblock {\em H-semi-slant submersions},
\newblock Preprint (2011).


\bibitem{PP}
K. S. Park, R. Prasad
\newblock {\em Semi-slant submersions},
\newblock Preprint (2011).



\bibitem{S}
B. Sahin,
\newblock {\em slant submersions from almost Hermitian manifolds},
\newblock Bull. Math. Soc. Sci. Math. Roumanie Tome (2011), \textbf{54}(102) No. 1,
93 - 105.




\bibitem{S2}
B. Sahin,
\newblock {\em Semi-invariant submersions from almost Hermitian manifolds},
\newblock Canad. Math. Bull. (2011), \textbf{54}, No. 3.



\bibitem{W}
B. Watson,
\newblock {\em Almost Hermitian submersions},
\newblock J. Differential Geom.(1976), \textbf{11}(1), 147-165.





\bibitem{W2}
B. Watson,
\newblock {\em $G,G'$-Riemannian submersions and nonlinear gauge field equations of general relativity},
\newblock In: Rassias, T. (ed.) Global Analysis - Analysis on manifolds,
dedicated M. Morse. Teubner-Texte Math., (1983), \textbf{57},
324-349, Teubner, Leipzig.








\end{thebibliography}
\end{document}